\makeatletter\@addtoreset{figure}{section}\makeatother
\makeatletter\@addtoreset{table}{section}\makeatother
\newtheorem{theorem}{Theorem}[]
\newtheorem{cor}[theorem]{Corollary}
\newenvironment{remark}{\refstepcounter{theorem}\par\medskip\noindent{\bf
Remark~\thetheorem~~}}{\unskip\nobreak\hfill\hbox{ $\oslash$}\par\bigskip}
\newenvironment{example}{\refstepcounter{theorem}\par \medskip \noindent{\bf
Example~\thetheorem~~}}{\unskip\nobreak\hfill\hbox{ $\oslash$}\par\bigskip}
\newcommand{\op}[1]{\!\!\mathop{\rm ~#1}\nolimits}
\begin{document}

\title{Applying Hodge theory to detect Hamiltonian flows}

\author{}
\author{\'Alvaro Pelayo${}^1$ and Tudor S. Ratiu${}^2$}
\addtocounter{footnote}{1} 

\footnotetext{Partially supported by an NSF postdoctoral 
fellowship and the program \textit{Symplectic and Contact
Geometry and Topology} at MSRI.
\addtocounter{footnote}{1} }

\footnotetext{ 
 Partially
supported by a Swiss NSF grant 
and the program \textit{Symplectic and Contact
Geometry and Topology} at MSRI.
\addtocounter{footnote}{1} }

\date{}

\maketitle

\begin{abstract}
We prove that when Hodge theory survives on non-compact
symplectic manifolds, a compact symplectic Lie group action
having fixed points is necessarily Hamiltonian, provided 
the associated almost complex structure preserves the space
of harmonic one-forms. For example, this is the case for
complete K\"ahler manifolds for which the symplectic
form has an appropriate decay at infinity. This extends a classical theorem of Frankel for
compact K\"ahler manifolds  to complete 
non\--compact K\"ahler manifolds. 
\end{abstract}

\section{Introduction}

The study of fixed points of dynamical systems and
group actions is a classical topic studied in
geometry. A seminal result of T. Frankel \cite{Frankel1959}
states that if a symplectic circle action on a compact
connected K\"ahler manifold has fixed points, then
it must be Hamiltonian. The present paper builds on
Frankel's ideas to study whether this striking result
persists under some reasonable conditions for possibly 
non-K\"ahler, non-compact symplectic manifolds. The 
non\--compact case is of special
interest in dynamical systems.

Loosely speaking, the goal of this paper is 
to prove that \emph{when Hodge theory survives on 
non-compact symplectic manifolds and the space of harmonic
one-forms is preserved by an associated almost complex structure}, 
the existence of a fixed point for a symplectic
action of a compact Lie group forces the action to
be Hamiltonian. This has particularly strong implications for
complete K\"ahler manifolds.
 
 All manifolds in this note are 
paracompact and boundaryless. 

\subsection{Main Theorem}

Let $(M,\omega)$ be a symplectic manifold.
The triple $(\omega,g,\mathbf{J})$ is a \emph{compatible triple} on $(M,\omega)$ if $g$ is a Riemannian
metric and $\mathbf{J}$ is an almost
complex structure (i.e., a vector bundle automorphism
$\mathbf{J} \colon \op{T}M \to \op{T}M$) satisfying
$\mathbf{J}^2=-\op{Identity}$)  such that
$g(\cdot,\cdot)=\omega(\cdot,\mathbf{J}\cdot)$.
For the following theorem recall that the standard construction
of a compatible triple from a symplectic form immediately
extends to the $G$-invariant case.

Let $G$ be a Lie group with Lie algebra $\mathfrak{g}$.
Suppose that $G$ acts on $M$ by symplectomorphisms (i.e., diffeomorphisms which preserve the symplectic form).
Any element
$\xi \in \mathfrak{g}$ generates a vector field $\xi_M$
on $M$, called the \emph{infinitesimal generator}, given
by $\xi_M(x):= \left.\frac{d}{dt}\right|_{t=0}  
\op{exp}(t\xi)\cdot x$, where $\op{exp} \colon
\mathfrak{g} \to G$ is the exponential map and
$x \in M$.
The  $G$-action on 
$(M,\omega)$  is said to 
be \emph{Hamiltonian} if there exists a smooth
equivariant map $\mu 
\colon M \to \mathfrak{g}^*$, called the \emph{momentum map}, 
such that for
all $\xi \in \mathfrak{g}$ we have  $\mathbf{i}_{\xi_M}
\omega : = \omega(\xi_M, \cdot) = \mathbf{d} \langle \mu, 
\xi \rangle$, where $\left\langle \cdot , \cdot 
\right\rangle : \mathfrak{g}^\ast \times \mathfrak{g} 
\rightarrow \mathbb{R}$ is the duality pairing. For
example, if $G\simeq (S^1)^k$, $k \in \mathbb{N}$, is a torus, the existence
of such a map $\mu$ is equivalent to the exactness of
the one-forms $\mathbf{i}_{\xi_M}\omega$ for all $\xi \in
\mathfrak{g}$. In this case the obstruction of the action 
to being Hamiltonian lies in the first de Rham cohomology
group of $M$. The simplest example of a $S^1$-Hamiltonian action is rotation of the sphere $S^2$ about the polar axis;
see Figure \ref{fig1}. The flow lines of the infinitesimal
generator defining this action are the latitude circles.

\begin{figure}[htbp]
  \begin{center}
    \includegraphics[width=7cm]{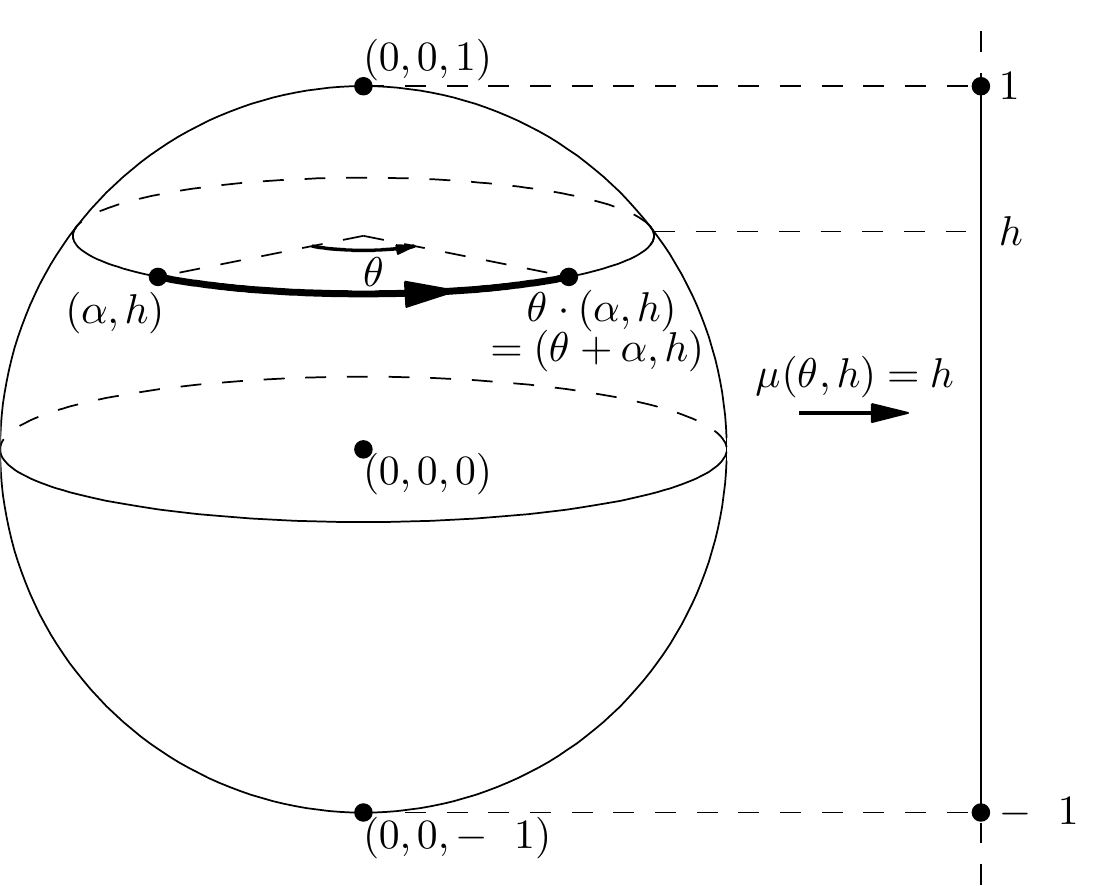}
   \caption{Momentum map for the $S^1$-action on $S^2$}
    \label{fig1}
  \end{center}
\end{figure}

We denote by $\operatorname{L}^2_\rho$ the $\operatorname{L}^2$-Hilbert space of square integrable functions relative 
the the measure $\rho$.

\begin{theorem}\label{thm_gen}
Let $G$ be a compact Lie group acting on a symplectic manifold $(M,\omega)$ by means of symplectomorphisms.
Let $(\omega, g, \mathbf{J})$ be a 
$G$-invariant compatible triple.
Let $\lambda$ be a measure on $M$ such that the 
Radon-Nikodym derivative of $\lambda$ relative to the Riemannian measure is a bounded
function on $M $ and denote by $\delta_\lambda $ the formal
adjoint of $\mathbf{d}$ relative to the 
$\operatorname{L}^2_ \lambda$ inner product. 

Assume that
each $\operatorname{L}^2_ \lambda$ closed one-form 
decomposes $\operatorname{L}^2_ \lambda$-orthogonally as a sum of
the differential of a $\operatorname{L}^2_ \lambda$ smooth function
and a harmonic $\operatorname{L}^2_ \lambda$ one-form (i.e., in the joint kernel of $\mathbf{d}$ and $\delta_\lambda$) and that each cohomology class of a closed one-form in $\operatorname{L}^2_ \lambda$ 
has a unique harmonic  representative.
If $\mathbf{J}$ preserves harmonic one-forms and the $G $-action has fixed points on every connected component, then the action is Hamiltonian. 
\end{theorem}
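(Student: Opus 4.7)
\emph{Strategy.} The plan is to carry out Frankel's original cohomological argument in the $\operatorname{L}^2_\lambda$-Hodge setting. For each $\xi\in\mathfrak{g}$, set $\alpha_\xi := \mathbf{i}_{\xi_M}\omega$; Cartan's formula and $G$-invariance of $\omega$ give $\mathbf{d}\alpha_\xi = \mathcal{L}_{\xi_M}\omega - \mathbf{i}_{\xi_M}\mathbf{d}\omega = 0$. Assuming $\alpha_\xi \in \operatorname{L}^2_\lambda$ (which I take to be built into the analytic setup), the hypothesis yields an $\operatorname{L}^2_\lambda$-orthogonal decomposition $\alpha_\xi = \mathbf{d}f_\xi + h_\xi$ with $h_\xi$ harmonic. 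The whole theorem reduces to showing $h_\xi = 0$: once that is done, choosing a fixed point $x_C$ in each connected component $C$ and normalizing $f_\xi(x_C) = 0$ makes $\xi\mapsto f_\xi$ linear, and $\langle\mu(x),\xi\rangle := f_\xi(x)$ is a momentum map. Its $\operatorname{Ad}^\ast$-equivariance follows from $g^\ast\mathbf{d}f_\xi = \mathbf{d}f_{\operatorname{Ad}_{g^{-1}}\xi}$, since the difference of the two smooth functions is locally constant and both vanish at the $G$-fixed point $x_C$.

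\emph{Vanishing of the harmonic part.} Replacing $\lambda$ by its Haar average $\tilde\lambda := \int_G g^\ast\lambda\,dg$, which is still absolutely continuous with bounded Radon--Nikodym derivative, one may assume $\lambda$ is $G$-invariant, so the flow $\phi_t=\exp(t\xi)$ acts as an isometry of $\operatorname{L}^2_\lambda$ and commutes with $\delta_\lambda$. Applying $\phi_t^\ast$ to the decomposition of $\alpha_\xi$ produces another such decomposition of $\phi_t^\ast\alpha_\xi = \alpha_\xi$, and uniqueness of the harmonic representative forces $\phi_t^\ast h_\xi = h_\xi$, i.e., $\mathcal{L}_{\xi_M}h_\xi=0$. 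By hypothesis $\mathbf{J}h_\xi$ is also harmonic and in particular closed, and $G$-invariance of $\mathbf{J}$ gives $\mathcal{L}_{\xi_M}(\mathbf{J}h_\xi)=\mathbf{J}\mathcal{L}_{\xi_M}h_\xi=0$; Cartan's formula then makes the function $\mathbf{i}_{\xi_M}(\mathbf{J}h_\xi)$ locally constant. It vanishes at every fixed point of the action (where $\xi_M=0$), and, since every connected component contains such a fixed point, it vanishes identically on $M$. The compatibility $g(\cdot,\cdot)=\omega(\cdot,\mathbf{J}\cdot)$ yields the pointwise identity $\mathbf{i}_{\xi_M}\omega = (\mathbf{J}\xi_M)^\flat$, whence $\langle h_\xi,\alpha_\xi\rangle_g = h_\xi(\mathbf{J}\xi_M) = (\mathbf{J}h_\xi)(\xi_M) = \mathbf{i}_{\xi_M}(\mathbf{J}h_\xi)$ pointwise. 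Combined with the $\operatorname{L}^2_\lambda$-orthogonality $\langle h_\xi,\mathbf{d}f_\xi\rangle_\lambda=0$, this gives
\[
 \|h_\xi\|_\lambda^2 \;=\; \langle h_\xi,\alpha_\xi\rangle_\lambda \;=\; \int_M \mathbf{i}_{\xi_M}(\mathbf{J}h_\xi)\,d\lambda \;=\; 0,
\]
so $h_\xi = 0$ and $\alpha_\xi = \mathbf{d}f_\xi$.

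\emph{Main obstacle.} Although the cohomological skeleton is Frankel's, the real difficulty is purely analytic and is concentrated in two steps I have glossed over: verifying that $\alpha_\xi$ and $\mathbf{J}h_\xi$ actually lie in the correct $\operatorname{L}^2_\lambda$ space (so that the hypothesis applies and $\mathbf{J}h_\xi$ qualifies as a legitimate harmonic representative), and justifying the replacement of $\lambda$ by its $G$-average while retaining the hypothesized $\operatorname{L}^2_\lambda$-orthogonal decomposition and uniqueness of harmonic representatives. These are precisely the places where the genuine analytic work---presumably carried out in the paper for complete K\"ahler manifolds with an appropriate decay of $\omega$ at infinity---has to be done.
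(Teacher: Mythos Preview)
Your argument is essentially the paper's own proof: both run Frankel's scheme in the $\operatorname{L}^2_\lambda$-Hodge setting, use the flow invariance of harmonic forms together with the fixed-point hypothesis to kill the harmonic piece of $\mathbf{i}_{\xi_M}\omega$ via the identity $\langle h_\xi,\alpha_\xi\rangle_g=(\mathbf{J}h_\xi)(\xi_M)$, and then assemble a momentum map. The only cosmetic differences are that the paper proves $\mathcal{L}_{\xi_M}\alpha=0$ for \emph{all} harmonic $\alpha$ and then pairs $\mathbf{i}_{\xi_M}\omega$ against an arbitrary harmonic form (rather than just $h_\xi$), and that the paper obtains equivariance by averaging over $G$ rather than by your fixed-point normalization. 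Your flagged ``main obstacles'' (that $\mathbf{i}_{\xi_M}\omega\in\operatorname{L}^2_\lambda$ and that $\lambda$ may be taken $G$-invariant) are precisely the points the paper also uses without stating them in the theorem; they are handled only in the applications (e.g.\ the complete K\"ahler corollary and the Ahmed--Stroock/Gong--Wang settings, where $U$ is taken $G$-invariant).
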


As far as the authors know
this is the first instance in which the relation between
the existence of fixed points and the Hamiltonian character
of the $G$\--action has been studied for non-compact manifolds. All assumptions 
of the theorem, with possibly
the exception of the existence of fixed points, hold
for compact K\"ahler manifolds.

\begin{remark}
Note that the theorem is implied by the case $G = S^1$
using a standard argument based on the fact that every 
point of a compact Lie group lies on a maximal torus. 
We recall the proof. First, note that if the theorem 
holds for $S^1$, then it also holds for any torus 
$\mathbb{T}^k: = (S^1)^k$, $k \in \mathbb{N}$,
since the momentum map of the product of two Hamiltonian 
actions is the sum of the two momentum maps. Now let $G$
be an arbitrary compact Lie group whose symplectic
action on $M$ has at least a fixed point. If $\xi \in 
\mathfrak{g}$, then $\exp \xi$ necessarily lies in a maximal 
torus and the restriction of the action to the torus has
fixed points. Since the conclusion of the theorem holds
for symplectic torus actions, it follows that this restricted
action has an invariant momentum map. In particular, 
$\mathbf{d}\mathbf{i}_{\xi_M} \omega = \mathbf{d}f^\xi$ 
for some $f ^\xi \in \operatorname{C} ^{\infty}(M)$, 
a relation valid for every $\xi\in \mathfrak{g}$. Using 
a basis $\{e_1, \ldots, e_r\}$ of 
$\mathfrak{g}$, we define a new map $\mu:M 
\rightarrow \mathfrak{g}^\ast$ by $\mu^ \xi := 
\xi^1f^{e_1}+ \cdots + \xi^rf^{e_r}$, where 
$\xi= \xi^1e_1+ \cdots + \xi^re_r$.
We clearly have $\mathbf{i}_{\xi_M} \omega = \mathbf{d}\mu^ 
\xi$ which proves that $\mu: M \rightarrow 
\mathfrak{g}^\ast$, defined by the requirement that 
its $\xi$-component is $\mu^\xi$ for each 
$\xi\in \mathfrak{g}$, is a momentum map of the $G $-action. 
Since $G$ is compact, one can construct out of $\mu$ an  
\emph{equivariant} momentum map (see, e.g., 
\cite[Theorem 11.5.2]{MaRa2003}), which proves that the 
action is Hamiltonian.
\end{remark}

\begin{example}
 The assumption 
that the action has fixed points in Theorem \ref{thm_gen}
is essential. 
For example, the $S^1$-action
on $\mathbb{T}^2$ given by $e^{2 i \varphi} \cdot 
(e^{2 i\theta_1}, e^{2 i\theta_2}): = (e^{2 i\theta_1}, 
e^{2 i(\theta_2+ \varphi)})$ is a 
symplectic action on a K\"ahler manifold which is free and 
hence has no fixed points.  Recall that the range of the derivative of the momentum
map at a given point equals the annihilator of the symmetry
algebra of that point (the Reduction Lemma). If $G=S^1$, since the manifold is compact, the momentum map
must have critical points which
shows that the action in this example does
not admit a momentum map. 
\end{example}

\subsection{Consequences in the K\"ahler case}

If the manifold is K\"ahler, the associated complex structure
automatically preserves the space of harmonic one-forms.
Since for compact manifolds one always has the Hodge decomposition for the measure $\omega^n$, $2n = \dim M$, we 
immediately conclude the following statement.

\begin{cor} 
Let $(M, \omega)$ be a compact symplectic $G $-space, where
$G $ is a compact Lie group. Assume that the the space of harmonic one-forms is invariant under the complex structure
(which always holds if $M $ is a K\"ahler manifold).
If the $G $-action has fixed
points on every connected component of $M $ then it is
Hamiltonian.
\end{cor}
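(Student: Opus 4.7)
The plan is simply to verify that all hypotheses of Theorem \ref{thm_gen} are satisfied in the compact setting, so that the corollary follows as a direct specialization. The main work is bookkeeping: choose the right measure and invoke classical Hodge theory.

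First, since $G$ is compact, I would average an arbitrary compatible triple with respect to the Haar measure on $G$ to produce a $G$-invariant compatible triple $(\omega, g, \mathbf{J})$; this is the standard construction alluded to in the paragraph preceding Theorem \ref{thm_gen}. In the K\"ahler case one can alternatively just take the given K\"ahler data (which is automatically invariant up to averaging).

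Next, I would take $\lambda$ to be the Riemannian volume measure induced by $g$ (equivalently, up to a positive constant, the Liouville measure $\omega^n/n!$, where $2n=\dim M$). With this choice the Radon-Nikodym derivative of $\lambda$ with respect to itself is the constant function $1$, which is obviously bounded on $M$, so the integrability hypothesis of Theorem \ref{thm_gen} holds. Moreover, because $M$ is compact, the space $\operatorname{L}^2_\lambda$ coincides with the usual $\operatorname{L}^2$ space of forms, and the formal adjoint $\delta_\lambda$ is the classical codifferential $\delta=-\ast\mathbf{d}\ast$.

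The Hodge decomposition conditions in Theorem \ref{thm_gen} now reduce to the classical Hodge theorem on compact Riemannian manifolds: every closed one-form admits a unique $\operatorname{L}^2$-orthogonal decomposition $\alpha = \mathbf{d}f + h$ with $h$ harmonic, and within each de Rham cohomology class there is a unique harmonic representative. These are standard and require no further argument. Finally, the hypothesis that $\mathbf{J}$ preserves harmonic one-forms is precisely what is assumed in the statement of the corollary, and it is automatic in the K\"ahler case because the harmonic one-forms decompose into $(1,0)$ and $(0,1)$ parts that are interchanged by $\mathbf{J}$ up to a sign, hence the real space is $\mathbf{J}$-invariant.

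With all six ingredients in place, Theorem \ref{thm_gen} applies verbatim and produces a momentum map, so the $G$-action is Hamiltonian. There is no real obstacle here; the only point that deserves attention is the choice of measure $\lambda$, since one must ensure that the boundedness of the Radon-Nikodym derivative and the validity of the Hodge decomposition happen simultaneously for the same $\lambda$. Taking $\lambda$ to be the Riemannian measure itself trivializes both conditions at once, which is why compactness makes the corollary immediate.
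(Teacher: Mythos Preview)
Your proposal is correct and is exactly the argument the paper intends: the sentence immediately preceding the corollary reads ``Since for compact manifolds one always has the Hodge decomposition for the measure $\omega^n$, $2n = \dim M$, we immediately conclude the following statement,'' and your write-up simply spells out this specialization of Theorem~\ref{thm_gen} with $\lambda$ equal to the Riemannian (equivalently Liouville) measure. There is nothing to add.
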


A consequence of the proof of Theorem \ref{thm_gen} is the following result whose proof is given at the end of Section \ref{sec_proof}.

\begin{cor} \label{nicecor}
Let $(M, \omega)$ be a $2n$-dimensional complete connected 
K\"ahler $G $-space, where $G $ is a compact Lie group. 
If the contraction of $\omega$ with all infinitesimal 
generators of the action is in $\op{L}^2_{\omega^n}$ and the $G$-action has fixed points then it is Hamiltonian.
\end{cor}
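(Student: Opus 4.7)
The plan is to apply Theorem \ref{thm_gen} with the choice $\lambda = \omega^n$. Reading ``K\"ahler $G$-space'' as meaning that $G$ acts by K\"ahler isometries, the K\"ahler data $(\omega, g, \mathbf{J})$ furnish a $G$-invariant compatible triple. Since $g(\cdot, \cdot) = \omega(\cdot, \mathbf{J}\cdot)$ with $\mathbf{J}^2 = -\mathrm{Id}$, the Riemannian volume form of $g$ equals $\omega^n/n!$, so the Radon-Nikodym derivative of $\omega^n$ with respect to the Riemannian measure is the constant $n!$, which is certainly bounded.

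Next I would invoke the classical $\operatorname{L}^2$ Hodge-Kodaira decomposition on a complete Riemannian manifold (due to de Rham and Kodaira, with the key analytic input provided by Gaffney's theorem that $\mathbf{d}+\delta$ is essentially self-adjoint on a complete manifold). This furnishes precisely the $\operatorname{L}^2$-orthogonal splitting of closed one-forms into the differential of a smooth $\operatorname{L}^2$ function plus a harmonic one-form, together with the uniqueness of the harmonic representative in each $\operatorname{L}^2$ cohomology class. Because $\omega^n$ agrees with the Riemannian measure up to a positive constant, the $\operatorname{L}^2_{\omega^n}$ Hilbert space and the Riemannian $\operatorname{L}^2$ Hilbert space coincide as topological vector spaces with proportional inner products, so the decomposition transfers immediately. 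The $\mathbf{J}$-invariance of the space of harmonic one-forms is then a standard consequence of the K\"ahler identities, which imply that $\mathbf{J}$ commutes with the Hodge-de Rham Laplacian and hence preserves its kernel.

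For each $\xi \in \mathfrak{g}$, Cartan's formula combined with $\mathcal{L}_{\xi_M}\omega = 0$ and $\mathbf{d}\omega = 0$ yields $\mathbf{d}(\mathbf{i}_{\xi_M}\omega) = 0$, so $\mathbf{i}_{\xi_M}\omega$ is a closed one-form, and it lies in $\operatorname{L}^2_{\omega^n}$ by hypothesis. Together with the assumed existence of fixed points (on the single connected component, since $M$ is connected), this verifies every hypothesis of Theorem \ref{thm_gen}, which then delivers the desired Hamiltonian conclusion.

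The main obstacle I anticipate is the careful deployment of the $\operatorname{L}^2_{\omega^n}$ Hodge theory in the non-compact setting: the orthogonal decomposition and uniqueness of harmonic representatives both rest on the completeness of the K\"ahler metric and require an appeal to Gaffney's essential self-adjointness theorem, which is the point where completeness is genuinely used. The proportionality of $\omega^n$ and the Riemannian volume form is precisely what makes the Liouville measure an admissible choice for $\lambda$ in Theorem \ref{thm_gen}; without the boundedness of the Radon-Nikodym derivative the reduction would break down, and that is why the hypothesis $\mathbf{i}_{\xi_M}\omega \in \operatorname{L}^2_{\omega^n}$, rather than an abstract Riemannian $\operatorname{L}^2$ condition, suffices.
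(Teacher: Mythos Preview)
Your proposal is correct and follows essentially the same route as the paper: identify $\operatorname{L}^2_{\omega^n}$ with the Riemannian $\operatorname{L}^2$ via the constant $n!$, invoke the weak $\operatorname{L}^2$ Hodge decomposition available on complete Riemannian manifolds (the paper cites Li rather than Gaffney, but the content is the same), use the K\"ahler identities for $\mathbf{J}$-invariance of harmonic one-forms, and then apply Theorem~\ref{thm_gen}. The only cosmetic difference is that the paper re-runs the four steps of the proof of Theorem~\ref{thm_gen} explicitly, whereas you invoke the theorem as a black box; since all its hypotheses are verified, your shortcut is legitimate.
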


%\begin{cor}
%Let $(M, \omega)$ be a Stein manifold.
%If a symplectic periodic flow on $M$ with generating vector field $X$ has fixed points and
%$\mathbf{i}_{X}\omega \in \op{L}^2_{\omega^n}$, then it
%is a Hamiltonian flow.
%\end{cor}

\subsection{Frankel's Theorem and further results}

The first result concerning the relationship between the existence of fixed points and the Hamiltonian character of the action is
Frankel's celebrated theorem \cite{Frankel1959} stating that if the manifold
is compact, connected, and K\"ahler, $G=S^1$, and
the symplectic action has fixed points, then it must be Hamiltonian (note that $\mathbf{J}
\mathcal{H} \subset \mathcal{H}$ holds, see \cite[Cor 4.11, Ch. 5]{Wells2008}). Frankel's influential work has inspired subsequent research.
McDuff \cite[Proposition 2]{McDuff1988} has shown that any symplectic
circle action on a compact connected symplectic 4-manifold
having fixed points
is Hamiltonian. However, the result is false in higher
dimensions since she gave an example 
(\cite[Proposition 1]{McDuff1988}) of a compact connected symplectic 
6-manifold with a symplectic circle action which has 
fixed points (formed by tori), but is not Hamiltonian. If the 
$S^1$-action is semifree (that is, it is free off the fixed
point set), then Tolman and Weitsman 
\cite[Theorem 1]{ToWe2000} have shown that any symplectic $S^1$-action on 
a compact connected symplectic manifold having fixed points 
is Hamiltonian. Feldman \cite[Theorem 1]{Feldman2001}
characterized the obstruction for
a symplectic circle action on a compact manifold to be
Hamiltonian and deduced the McDuff and Tolman-Weitsman theorems by applying his
criterion. He showed that the Todd genus of a manifold admitting a symplectic circle action with isolated
fixed points is equal either to 0, in which case the action is non-Hamiltonian, or to 1, in which
case the action is Hamiltonian. In addition, any symplectic circle action on a manifold with positive Todd genus is Hamiltonian. For additional results regarding aspherical
symplectic manifolds (i.e.  $\int_{S^2} f ^\ast \omega = 0$ 
for any smooth map $f: S^2 \rightarrow M$) see \cite[Section 8]{KeRuTr2008} and \cite{LuOp1995}.
As of today, there are no known examples of symplectic $S^1$-actions on compact connected symplectic manifolds that are not Hamiltonian but have fixed points.

For higher dimensional Lie groups, less is known.
Giacobbe \cite[Theorem 3.13]{Giacobbe2005} proved that a symplectic
action of a $n$-torus on a 
$2n$-dimensional compact connected symplectic manifold with fixed points is necessarily Hamiltonian;
see also \cite[Corollary 3.9]{DuPe2007}. If $n=2 $ this
result can be checked explicitly from the classification 
of symplectic 4-manifolds with symplectic 2-torus actions
given in \cite[Theorem 8.2.1]{Pelayo2010} (since cases 2--5 in the statement of the theorem are
shown not to be Hamiltonian; the only non-K\"ahler
cases are given in items 3 and 4 as proved in 
\cite[Theorem 1.1]{DuPe2010}). 

If $G$
is a Lie group with Lie algebra $\mathfrak{g}$ acting 
symplectically on the symplectic manifold $(M, \omega) $, 
the action is said to be \textit{cohomologically free} if 
the Lie algebra homomorphism
$\xi \in \mathfrak{g} \mapsto [\mathbf{i}_{\xi_M} \omega] \in  \operatorname{H}^1(M, \mathbb{R})$ is 
injective; $\operatorname{H}^1(M, \mathbb{R})$ is regarded as an abelian
Lie algebra.  Ginzburg \cite[Proposition 4.2]{Ginzburg1992}
showed that if a torus $\mathbb{T}^k = (S^1)^k$, $k \in \mathbb{N}$, acts symplectically,
then there exist subtori $\mathbb{T}^{k-r}$, $\mathbb{T}^r$
such that $\mathbb{T}^k =\mathbb{T}^r\times\mathbb{T}^{k-r}$,  the $\mathbb{T}^r$-action is cohomologically free,
and the $\mathbb{T}^{k-r}$-action is Hamiltonian. This homomorphism is the
obstruction to the existence of a momentum map: it vanishes
if and only if the action admits a momentum map. For compact
Lie groups the previous result holds only up to coverings.
If $G$ is a compact Lie group, then it is well-known that
there is a finite covering $\mathbb{T}^k \times K \rightarrow 
G $, where $K$ is a semisimple compact Lie group. So
there is a symplectic action of $\mathbb{T}^k \times K$
on $(M, \omega)$. The $K$-action is Hamiltonian, since
$K$ is semisimple. The previous result applied to 
$\mathbb{T}^k$ implies that there is a finite covering 
$\mathbb{T}^r\times (\mathbb{T}^{k-r} \times K) \rightarrow 
G$ such that the $(\mathbb{T}^{k-r} \times K)$-action is
Hamiltonian and the $\mathbb{T}^r$-action is cohomologically
free; this is \cite[Theorem 4.1]{Ginzburg1992}. The Lie
algebra of $\mathbb{T}^{k-r} \times K$ is $\ker\left(\xi
\mapsto [\mathbf{i}_{\xi_M} \omega]\right)$.

\textbf{Acknowledgements.} We thank Ian Agol, Denis Auroux, 
Dan Halpern\--Leistner, and Alan Weinstein for helpful
discussions.

\section{Proofs of Theorem \ref{thm_gen} and Corollary \ref{nicecor}}
\label{sec_proof}

The proof extends Frankel's method \cite{Frankel1959} to the case of non-compact manifolds.

Let $G$ be a compact Lie group acting on the symplectic manifold $(M,\omega)$ by means of symplectic diffeomorphisms.  Let $(\omega, g, \mathbf{J})$ be a 
$G$-invariant compatible triple. 
Let $\lambda$ be a measure on $M $ such that the 
Radon-Nikodym derivative of the Riemannian measure with respect to $\lambda$ is a bounded
function on $M $ and denote by $\delta_\lambda $ the formal
adjoint of $\mathbf{d}$ relative to the 
$\operatorname{L}^2_ \lambda$ inner product, that is,
\[
\int_M \left\langle \! \left\langle \mathbf{d} \alpha, \beta \right\rangle \! \right\rangle
d \lambda =
\int_M \left\langle \! \left\langle \alpha, \delta_\lambda \beta \right\rangle \! \right\rangle d \lambda,
\]
for all $\alpha \in \Omega^q(M)$, $\beta \in \Omega^{q+1}(M) $,
where $\left\langle \! \left\langle\,, \right\rangle \! \right\rangle$ is the inner product
on forms.
Let $\|\cdot \|_{\operatorname{L}^2_\lambda}$ be the $\operatorname{L}^2$-norm on forms
relative to the measure $\lambda$. 

By assumption we have:
\begin{itemize}
\item[{\rm (i)}] 
Any $\alpha\in \Omega^1(M)$ such that $\|\alpha\|_{\operatorname{L}^2_\lambda} < \infty$ and $ \mathbf{d}\alpha = 0$ has a unique
$\operatorname{L}^2_ \lambda$-orthogonal decomposition $\alpha = 
\mathbf{d}f + \chi$, where $f \in C ^{\infty}(M)$, $\mathbf{d}f \in 
\operatorname{L}^2_{\lambda}(M) $, $\mathbf{d}\chi=0$, $\delta_\lambda \chi
=0 $, $\chi\in \operatorname{L}^2(\wedge^1M, g)\cap \Omega^1(M)$. Such forms $\chi$ are
called \emph{harmonic}. Let $\mathcal{H}$ denote the space of
harmonic one-forms.
\item[{\rm (ii)}] If a cohomology class $[\alpha] \in \operatorname{H}^1(M, \mathbb{R})$ with $\| \alpha\|_{ \operatorname{L}^2_\lambda} < \infty$ has a
harmonic representative, it is necessarily unique.
\item[{\rm (iii)}] $\mathbf{J}\mathcal{H} \subset \mathcal{H}$.
\end{itemize}

\begin{remark}
Condition (ii) can be reformulated as:
\begin{itemize}
\item[{\rm (ii')}] If $f \in \operatorname{C}^{\infty}(M)$,
$\| \mathbf{d} f\|_{\operatorname{L}^2_\lambda}<\infty$, and 
$\delta_\lambda \mathbf{d} f = 0 $ then $f$ is
a constant function on each connected component of $M $.
\end{itemize}

Indeed, suppose (ii') holds and let $\alpha $ and $\beta$ 
be two harmonic
representatives of the same cohomology class with finite $\operatorname{L}^2_\lambda$-norm, then 
$\alpha - \beta = \mathbf{d}f$ for some $f \in 
\operatorname{C} ^{\infty}(M)$, 
$\|\mathbf{d}f\|_{\operatorname{L}^2_\lambda} < \infty$. Therefore
$\delta_\lambda \mathbf{d}f = \delta_\lambda( \alpha - \beta)
= 0$ and hence, by (ii'), it follows that $f$ is constant
on each connected component of $M $ implying that $\alpha = \beta$. Conversely, if $\|\mathbf{d}f\|_{\operatorname{L}^2_\lambda} < \infty$ and
 $\delta_\lambda \mathbf{d} f = 0 $, then $\mathbf{d} f$
is a smooth $\operatorname{L}^2_\lambda$ harmonic one-form 
representing the zero cohomology class. Thus, by (ii),
$f$ is constant on each connected component of $M$.
\end{remark}

\medskip

We want to show that if the $G $-action has fixed points on every connected
component of $M $, then the action is Hamiltonian.
\medskip

\noindent\textbf{Proof of Theorem \ref{thm_gen}.} We divide 
the proof in four steps.
\smallskip

\noindent \textbf{Step 1} (Vanishing of harmonic one\--forms along infinitesimal generators). We show that if $\alpha \in 
\Omega^1(M)$ is harmonic and $\| \alpha\|_{\operatorname{L}^2_\lambda} < \infty$, then $\boldsymbol{\pounds}_{ \xi_M}\alpha=0$
which is a 
standard result for the usual codifferential (Killing
vector fields preserve harmonic one-forms). Since, in our
case, we use $\delta_\lambda$ instead of the usual codifferential we give the proof. 

We begin with the following
observation: if $\varphi:M \rightarrow M$ is an isometry and
preserves the measure $\operatorname{d}\!\lambda$, that
is, $\varphi^\ast g = g$ and $\varphi^\ast(\operatorname{d}\! \lambda) =  \operatorname{d}\!\lambda$, then
\begin{equation}
\label{useful_formula}
\varphi^\ast \left(\left\langle \! \left\langle \nu, \rho \right\rangle \! \right\rangle \operatorname{d}\!\lambda\right) = 
\left\langle \! \left\langle \varphi ^\ast \nu, \phi^\ast \rho \right\rangle \! \right\rangle
\operatorname{d}\!\lambda
\end{equation}
for any $\nu, \rho \in \Omega^1(M)$. 

Next, let
$F_t := \Phi_{\exp(t \xi)} $ be the flow of $\xi_M$ 
which is an isometry of $(M, g)$.
Since $\mathbf{d} \alpha = 0 $ it follows that $\mathbf{d}
F  _t^\ast \alpha = F _t^\ast \mathbf{d}\alpha = 0$.
We now show that $F_t$ commutes with $\delta_\lambda$. Indeed,
for any $\beta, \gamma \in \Omega^1(M) $, we have
\begin{align*}
\left\langle \delta_\lambda F_t^\ast \beta, \gamma 
\right\rangle_{\operatorname{L}^2_{\lambda}} 
& = \int_M \left\langle \! \left\langle F_t^\ast\beta, \mathbf{d} \gamma
\right\rangle \! \right\rangle \operatorname{d}\!\lambda 
\stackrel{\eqref{useful_formula}}=
\int_M F _t^\ast \left(\left\langle \! \left\langle \beta, (F_t)_* \mathbf{d}
\gamma \right\rangle \! \right\rangle \operatorname{d}\!\lambda\right)
= \int_M \left\langle \! \left\langle \beta, (F_t)_* \mathbf{d}
\gamma \right\rangle \! \right\rangle \operatorname{d}\!\lambda \\
& 
= \int_M \left\langle \! \left\langle \delta_\lambda\beta, (F_t)_* 
\gamma \right\rangle \! \right\rangle \operatorname{d}\!\lambda
\stackrel{\eqref{useful_formula}}=
\int_M (F_t)_* \left(\left\langle \! \left\langle F _t^\ast \delta_\lambda
\beta, \gamma \right\rangle \! \right\rangle \operatorname{d}\! \lambda\right)
= \left\langle F_t^\ast \delta_\lambda  \beta, \gamma 
\right\rangle_{\operatorname{L}^2_{\lambda}}
\end{align*}
and hence $\delta_\lambda F_t^\ast \beta = 
F_t^\ast \delta_\lambda \beta$. In particular, 
$\delta_\lambda\alpha = 0$ implies that $\delta_\lambda
F _t^\ast \alpha = F _t^\ast \delta_\lambda \alpha = 0$,
which shows that $F _t^\ast \alpha$ is harmonic.

However, in $\op{H}^1(M, \mathbb{R}) $ we have
$[F_t^\ast \alpha] = F_t^\star[ \alpha] = [\alpha]$ 
since $F _t$ is isotopic
to the identity; here $F_t^\star $  denotes the isomorphism induced by the diffeomorphism $F _t$ on the cohomology groups. Therefore, the relation 
$[F_t^\ast \alpha] = [\alpha]$ implies that $F _t^\ast \alpha = \alpha$ since
both $F _t^\ast\alpha$ and $\alpha$ are harmonic and each cohomology
class has a unique harmonic representative by hypothesis (ii). Taking the $t $-derivative
implies that $\boldsymbol{\pounds}_{ \xi_M} \alpha =0$, as required.
\smallskip

\noindent\textbf{Step 2} (Using the existence of fixed points). Define $\xi_M^\flat: = g( \xi_M, \cdot ) \in \Omega^1(M) $. If $\alpha\in \Omega^1(M)$ is harmonic and $\| \alpha\|_{ \operatorname{L}^2_{ \lambda}} < \infty$,
it follows from Step 1 that $0 = \boldsymbol{\pounds}_{\xi_M} \alpha = \mathbf{i}_{\xi_M} \mathbf{d} \alpha + \mathbf{d}\mathbf{i}_{\xi_M} \alpha = \mathbf{d}\mathbf{i}_{\xi_M} \alpha$. Thus $\alpha(\xi_M)$ is constant on each connected component of
$M$. At this point we use the crucial hypothesis that the
group action has at least one fixed point on each connected
component. Thus, $\alpha(\xi_M) =0$ on $M$. Therefore, 
\begin{equation}
\label{vanishing}
\left\langle \xi_M^\flat, \alpha \right\rangle_{\op{L}^2_\lambda}
= \int_M \alpha(\xi_M) \operatorname{d}\! \lambda = 0
\end{equation}
for any harmonic one-form $\alpha$ satisfying $\| \alpha\|_{\operatorname{L}^2_{ \lambda}} < \infty$, where $\dim M = 2n$. 
\smallskip

\noindent\textbf{Step 3} (Applying the existence of a Hodge decomposition). Since $\mathbf{d}\mathbf{i}_{\xi_M} \omega=0$ and $\|\mathbf{i}_{\xi_M} \omega\|_{\operatorname{L}^2_{ \lambda}} < \infty$, by
hypothesis (i) we have  $\mathbf{i}_{\xi_M} \omega = 
\mathbf{d} f^ \xi + \chi^ \xi$, where 
$f^ \xi \in \op{C}^{\infty}(M) $,   
$\chi^ \xi \in \Omega^1(M)$ is harmonic, $\|\mathbf{d} f ^\xi\|_{\operatorname{L}^2_{ \lambda}} < \infty$, and 
$\|\chi^\xi\|_{\operatorname{L}^2_{ \lambda}} < \infty$. 

Let us prove that 
$\chi^\xi=0$. To do this, recall
that $\mathbf{J}$ is defined on one-forms by the relation 
$(\mathbf{J} \beta)(X) = \beta(\mathbf{J}X) $ for $\beta
\in \Omega^1(M)$ and $X \in \mathfrak{X}(M)$. Therefore,
$\mathbf{i}_{\xi_M} \omega = -\mathbf{J}\xi_M^\flat$. Indeed,
for any $Y \in \mathfrak{X}(M) $ we have
\begin{align*}
(\mathbf{i}_{\xi_M} \omega)(Y) = \omega(\xi_M, Y)
= - \omega(\xi_M, \mathbf{J}( \mathbf{J}Y))
= - g(\xi_M, \mathbf{J}Y) 
= - \xi_M^\flat(\mathbf{J}Y)
= - (\mathbf{J}\xi_M^\flat)(Y).
\end{align*}
Let $ \alpha
\in \Omega^1(M) $ be an arbitrary harmonic one-form such 
$\|\alpha\|_{\operatorname{L}^2_{ \lambda}} < \infty$. Then
\begin{align*}
\left\langle \mathbf{i}_{\xi_M} \omega, \alpha 
\right\rangle_{\operatorname{L}^2_ \lambda}
= \left\langle - \mathbf{J}\xi^ \flat_M, \alpha 
\right\rangle_{\operatorname{L}^2_ \lambda}
= -\int_M \left\langle \! \left\langle \mathbf{J}\xi_M^\flat, \alpha 
\right\rangle \! \right\rangle \operatorname{d}\! \lambda
= -\int_M \left\langle \! \left\langle \xi_M^\flat, \mathbf{J}\alpha 
\right\rangle \! \right\rangle \operatorname{d}\! \lambda
= -\left\langle \xi_M^\flat, \mathbf{J}\alpha 
\right\rangle_{\op{L}^2_\lambda}.
\end{align*}
By hypothesis (iii), $\mathbf{J} \alpha$ is harmonic and
therefore $\left\langle \xi_M^\flat, \mathbf{J}\alpha 
\right\rangle_{\op{L}^2_\lambda} = 0 $ by \eqref{vanishing}.
Using again hypothesis (i), we conclude that $\chi^\xi =0$.
Therefore
$\mathbf{i}_{\xi_M} \omega = \mathbf{d}f^ \xi $ for
any $\xi\in \mathfrak{g}$ and both sides of this identity
are linear in $\xi \in \mathfrak{g}$.
\smallskip

\noindent\textbf{Step 4} (Construction of an equivariant momentum map). Using a basis 
$\{e_1, \ldots, e_r\}$ of 
$\mathfrak{g}$, we define a new map $\mu:M 
\rightarrow \mathfrak{g}^\ast$ by $\mu^ \xi := 
\xi^1f^{e_1}+ \cdots + \xi^rf^{e_r}$, where 
$\xi= \xi^1e_1+ \cdots + \xi^re_r$.
We clearly have $\mathbf{i}_{\xi_M} \omega = \mathbf{d}\mu^ 
\xi$ which proves that $\mu: M \rightarrow 
\mathfrak{g}^\ast$, defined by the requirement that 
its $\xi$-component is $\mu^\xi$ for each 
$\xi\in \mathfrak{g}$, is a momentum map of the $G $-action. 

Since $G$ is compact, one can construct out of $\mu$ an  
equivariant momentum map (see, e.g., 
\cite[Theorem 11.5.2]{MaRa2003}, which shows that the 
action is Hamiltonian thereby completing the proof of the theorem.
\medskip
\medskip

\noindent \textbf{Proof of Corollary \ref{nicecor}.} 
By hypothesis, $M$ is a K\"ahler $G$-space, that is, 
$(\omega, g , \mathbf{J})$ is a $G$-invariant compatible triple. Recall that $\omega^n = n! \mu_g$, where
$\mu_g$ is the volume form associated to the
Riemannian metric $g$ (see, e.g., 
\cite[formula (4.20]{Ballmann2006}) and hence 
$\operatorname{L}^2_{\mu_g} = 
\operatorname{L}^2_{\omega^n}$. Take $\lambda = \mu_g$ and
hence $\delta_\lambda = \delta$ is the usual codifferential associated to the Riemannian metric $g$. 

Now repeat the proof of Theorem \ref{thm_gen}. In Step 1 
the crucial fact was that if a cohomology class has a
harmonic $\operatorname{L}^2$ representative, then it is
unique. In the hypotheses of the corollary, this is implied
by the weak $\operatorname{L}^2$-Hodge decomposition which holds for all complete non-compact
Riemannian manifolds (see \cite{Li2010} for the general 
$\operatorname{L}^p$ formulation; for the strong 
$\operatorname{L}^p$ version see \cite{Li2009}) and the
fact that every infinitesimal generator is a Killing vector field. Step 2 is unchanged.
Step 3 follows again from the weak $\operatorname{L}^2$-Hodge decomposition. Indeed, by hypothesis, the smooth 
closed one-form $\mathbf{i}_{ \xi_M} \omega \in 
\operatorname{L}^2_{\mu_g}$ 
for any $\xi\in \mathfrak{g}$ and hence it decomposes 
$\operatorname{L}^2_{\mu_g}$-orthogonally
as $\mathbf{i}_{\xi_M} \omega = \mathbf{d}f^\xi + 
\chi ^\xi$, where $f ^\xi\in \operatorname{C} ^{\infty}(M)$
and $\chi^\xi \in \Omega^1(M)$ is harmonic, $\| \mathbf{d} f ^\xi\|_{\operatorname{L}^2_{\mu_g}}<\infty$, $\| \chi ^\xi\|
_{\operatorname{L}^2_{\mu_g}}<\infty$. As before, 
$\mathbf{i}_{\xi_M} \omega = \mathbf{J} \xi_M^\flat$ and for
any harmonic $\alpha \in \Omega^1(M)$, $\| \alpha\|_{\operatorname{L}^2_{\mu_g}}<\infty$, we have $\left\langle \mathbf{i}_{\xi_M} \omega, \alpha 
\right\rangle_{\operatorname{L}^2_{\mu_g}} = 
-\left\langle \xi_M^\flat, \mathbf{J}\alpha 
\right\rangle_{\op{L}^2_{\mu_g}}$. Since $M$ is K\"ahler,
$\mathbf{J}\alpha$ is also harmonic (see, e.g., \cite[Cor 4.11, Ch. 5]{Wells2008}). Thus, by Step 2, 
$\left\langle \xi_M^\flat, \mathbf{J}\alpha 
\right\rangle_{\op{L}^2_\lambda} = 0$, which shows that 
$\chi^\xi = 0 $. Step 4 is unchanged.

\section{Examples}

The previous results apply in the following  examples.

\subsection{K\"ahler quotients} \label{11}

A large class of examples can be obtained using a
construction that will be presented below. In all that follows we assume that the manifolds are second countable. 
We begin with  a few preliminary remarks.

Let $\Gamma$ be a group that acts properly discontinuously
on a manifold $M $, that is, each $x \in M $ has 
a neighborhood $U $ such that $(\gamma \cdot U) \cap U = 
\varnothing$ for all $\gamma\neq e$. In particular, the 
$\Gamma$-action is free. Then the orbit space $M/\Gamma$ is a smooth manifold and the projection $p: M \rightarrow M/\Gamma $ is both
a local diffeomorphism and a covering map. Assume that $\mu \in \Omega^n(M)$, $n = \dim M$, is a $\Gamma$-invariant volume 
form on $M $. Then $M/\Gamma$ has a unique volume form $\nu
\in \Omega^n(M/\Gamma)$ such that $p^\ast \nu = \mu$. It is worth
noting that the measure $m_{M/\Gamma}$ on $M/\Gamma$ associated to 
the volume form $\nu$ does not coincide with the push 
forward of the measure $m_M$ on $M $ associated to the 
volume form $\mu$. Recall that $m_M$ is defined by the 
requirement that 
$
\int_M \psi \operatorname{d}\!m_M = \int_M \psi \mu
$
for any $\psi \in \operatorname{C}^{\infty}(M)$ with compact support, where the integral on the left is relative
to the measure $m_M $ and the integral on the right is
relative to the volume form $\mu$. A similar definition
holds for $m_{M/\Gamma}$.

Let $F$ be a fundamental domain of the $\Gamma$-action 
on $M$, that is, $F \subset M $ is a set such that 
each $\Gamma$-orbit
intersects it in a single point. In particular, the restriction of the projection $p$ to $F$ gives a  
bijection between $F $ and $M/\Gamma$. Assume: 
\begin{itemize}
\item[(i)] the interior $\operatorname{int}(F) \neq\varnothing$
\item[(ii)] the point set theoretical boundary $\overline{F} \setminus \operatorname{int}(F)$ has zero $m_M $-measure.
\end{itemize}
\textbf{Claim 1:} For any 
$m_{M/\Gamma}$-integrable $f \in 
\operatorname{C}^{\infty}(M/\Gamma)$ 
we have
\begin{equation}
\label{integral_quotient}
\int_{M/\Gamma} f \operatorname{d}\!m_{M/\Gamma} 
= \int_{\operatorname{int}(F)}
(f \circ p) \operatorname{d}\!m_M 
= \int_{\overline{F}}(f \circ p) \operatorname{d}\!m_M.
\end{equation}
Note that these integrals are \textit{not} identically equal to zero 
by hypothesis (i).

To see this we begin by showing that the 
$m_{M/\Gamma}$-measure
of $M/\Gamma \setminus p( \operatorname{int}(F)) $ is zero. Indeed, since $p(\overline{F}) = M/\Gamma$, because $F $ is a fundamental domain, and $p\left(\overline{F}\right)\setminus 
p\left(\operatorname{int}(F)\right) \subseteq p \left(
\overline{F} \setminus \operatorname{int}(F) \right)$ we get
$
m_{M/\Gamma} \big(M/\Gamma \setminus p(\operatorname{int}(F)\big)
= m_{M/\Gamma} \big(p(\overline{F}) \setminus p(\operatorname{int}(F)\big) \leq
m_{M/\Gamma} \big(p( \overline{F} \setminus \operatorname{int}(F)
\big).
$
However, $m_M(\overline{F} \setminus \operatorname{int}(F))
=0$ by hypothesis (ii) and since the smooth map $p$ maps
measure zero sets to measure zero sets ($M$ is second
countable), it follows that $m_{M/\Gamma} \big(p( \overline{F} \setminus \operatorname{int}(F)\big)=0$.

Formula \eqref{integral_quotient} follows from a change of
variables and the remark above. Indeed, for any $m_{M/\Gamma}$-integrable $f \in \operatorname{C} ^{\infty}(M/\Gamma) $ we have
\begin{align*}
\int_{M/\Gamma} f \operatorname{d}\!m_{M/\Gamma} 
&= \int_{p(\operatorname{int}(F))}f \operatorname{d}\!m_M
= \int_{p(\operatorname{int}(F))}f\nu
= \int_{\operatorname{int}(F)}p ^\ast(f \nu)\\
& = \int_{\operatorname{int}(F)}(p ^\ast f)(p ^\ast \nu) \
= \int_{\operatorname{int}(F)}(f\circ p)\mu
=\int_{\operatorname{int}(F)}(f\circ p)\operatorname{d}\!m_M.
\end{align*}
This concludes the proof of Claim 1.

Assume now that a group $\Gamma$ acts on 
two volume manifolds $M_1$ and $M_2$ and that the action 
on $M_2$ is properly discontinuous. The the diagonal 
$\Gamma$-action on $M_1 \times M_2 $ is also properly 
discontinuous. Let $F \subseteq M_2$ be the fundamental 
domain of the $\Gamma$-action on $M_2 $ and assume (i) 
and (ii). The fundamental domain of the diagonal action 
is easily verified to equal $M_1 \times F$. Denote the 
measures on $M_1$, $M_2$, and $(M_1 \times M_2)/\Gamma$ by 
$m_1$, $m_2$, and $q $, respectively. 

\noindent \textbf{Claim 2:} Assume that 
$m_2(\overline{F}) < \infty$. Then, for any $q$-integrable
$f \in \operatorname{C} ^{\infty}((M_1 \times M_2)/\Gamma)$ 
such that $f \circ p $ does not depend on $M_2$, we
have 
\begin{equation}
\label{important_inequality}
\int_{(M_1 \times M_2)/\Gamma} f \operatorname{d}\!q
= m_2(\overline{F}) \int_{M_1} (f \circ p) 
\operatorname{d}\!m_1.
\end{equation}
Indeed, denoting by $\chi_{\overline{F}} $ the characteristic
function of $\overline{F}$, using the Fubini theorem we get
\begin{align*}
\int_{(M_1 \times M_2)/\Gamma} f \operatorname{d}\!q
&\stackrel{\eqref{integral_quotient}}=
\int_{\overline{M_1 \times F}}(f \circ p) \operatorname{d}
(m_1 \times m_2)
= \int_{M_1 \times M_2} \chi_{\overline{F}}(f \circ p)
\operatorname{d}(m_1 \times m_2)\\
& = \int_{M_1} \left[\int_{M_2} \chi_{\overline{F}}
(f \circ p) \operatorname{d}\!m_2 \right]
\operatorname{d}\!m_1
= \int_{M_1}(f \circ p)\left[\int_{M_2} \chi_{\overline{F}}
\operatorname{d}\!m_2 \right]
\operatorname{d}\!m_1\\
&= m_2(\overline{F})\int_{M_1} (f\circ p)
\operatorname{d}\!m_1.
\end{align*}
 
Now we construct a class of examples for Corollary 
\ref{nicecor}. Let a compact Lie group $G $ act by K\"ahler transformations
on a compact  K\"ahler manifold $M_1$.
 Suppose that the $G$-action on $M_1$ has fixed points. 
Let $M_2$ be a complete (possibly noncompact) 
K\"ahler manifold with finite
volume. Let $\rho \colon \pi_1(M_2)\times M_1 \rightarrow M_1$ be a K\"ahler action which commutes with 
the $G$-action on $M_1$.  
Let $\pi:\pi_1(M_2) \times M_2 \rightarrow M_2$ be the natural action by covering translations of 
$\pi_1(M_2)$ which is K\"ahler. Then the diagonal action
$\pi \times \rho \colon \pi_1(M_2) : M_1 \times M_2 \rightarrow M_1 \times M_2$ is also K\"ahler and commutes
with the $G$-action.
Let the twisted product $\mathcal{M}
:=(M_1 \times \widetilde{M}_2)/(\pi \times \rho)$
be equipped with the K\"ahler structure 
inherited from the product symplectic form
on $M_1 \times \widetilde{M}_2$. Denote by $\omega$
the symplectic form on $\mathcal{M}$. Let $G$ act on
$\mathcal{M}$ by means of the $G$\--action on $M_1$,
leaving $M_2$ fixed. Then the $G$-action
on the complete K\"ahler manifold $\mathcal{M}$ has fixed points. Since $M_1$ is compact and $m_2(\overline{F}) < \infty$, we have 
\begin{eqnarray}
\| \mathbf{i}_{ \xi_M} \omega\|_{ \operatorname{L}^2} =
\int_{(M_1 \times \widetilde{M}_2)/(\pi \times \rho)}  \| {\mathbf i}_{\xi_{ \mathcal{M}}}\omega \|^2 \operatorname{d}\!q 
&\stackrel{\eqref{important_inequality}}
=& m_2( \overline{F}) \int_{M_1} \left(
\| {\mathbf i}_{\xi_{ \mathcal{M}}}\omega \|^2 \circ p \right) \operatorname{d}\!m_1 <\infty.
\end{eqnarray}
Hence the assumption of Corollary \ref{nicecor} are satisfied
and hence the $G $-action on $\mathcal{M}$ is Hamiltonian.

Another way this example could have been treated is by
applying Frankel's original theorem for compact K\"ahler
manifolds to $M_1 $ and then using reduction.

Let us spell out a concrete example of this situation.
Let $M_1:=S^2$
with the standard Fubini-Study form and the standard
Hamiltonian $S^1$\--action with $2^n$ fixed points.
Let $M_2:=\Sigma_{\infty}$ be a complete K\"ahler symplectic $2$\--manifold
of infinite genus and finite symplectic volume.
Let $\pi_1(\Sigma_{\infty})$
act on $\widetilde{\Sigma}_{\infty}$ by covering translations
and let $\pi \times \rho$ act on $S^2 \times \widetilde{\Sigma}_{\infty}$ 
by the diagonal action, i.e.
$\pi \times \rho \colon \pi_1(\Sigma_{\infty})
\to  \op{SO}(3) \times \op{Sympl}(\widetilde{\Sigma}_{\infty})$, $g \mapsto (\pi(g),\, \rho(g))$,
where $\rho \colon \pi_1(\Sigma) \to \op{SO}(3)$
is any representation that commutes with the
Hamiltonian $S^1$\--action on $S^2$ given by rotations
about the vertical axis, in other words, that $\rho$
factors through $\op{SO}(2)$. This action is by K\"ahler automorphisms,
so the quotient $(S^2 \times \widetilde{\Sigma}_{\infty})/(\pi\times \rho)$ is K\"ahler.

\subsection{Manifolds constructed by symplectic sum}

One can also construct examples that satisfy the assumption of Corollary \ref{nicecor}
using Gompf's symplectic sum \cite{Gompf1995}.  These examples can be viewed as a sub-collection
of those given in Section \ref{11}, so we do not provide details. Nonetheless, this
construction is different, so it is worth presenting and outline. Let $k >1$ and $\omega_{\textup{FS}}$ be the Fubini\--Study symplectic form
on $\mathbb{C}P^{1}$ and let $\omega_{\mathbb{T}^2}$
be the standard area form on the $2$\--torus. 
Let the finite additive group $\mathbb{Z}/q \mathbb{Z}$, $q \in \mathbb{N}$, act on $(\mathbb{C}P^1)^{n-1}$
by rotating $360/q$ degrees on one or more copies of $\mathbb{C}P^1$
inside of $(\mathbb{C}P^1)^{n-1}$, and by rotations of $360/q$ degrees on the first (or both)
standard sub-circles of $\mathbb{T}^2$. This gives rise to a diagonal symplectic action of $\mathbb{Z}/q \mathbb{Z}$
on the product manifold $(\mathbb{C}P^1)^{n-1} \times \mathbb{T}^2$ equipped with the symplectic form $(n-1) \omega_{\textup{FS}} \oplus \frac{1}{m^k}\, \omega_{\mathbb{T}^2}$.
The quotient $(\mathbb{C}P^1)^{n-1} \times_{\mathbb{Z}/q \mathbb{Z}} \mathbb{T}^2$ is a smooth manifold endowed with
the quotient symplectic form denoted by $(n-1) \omega_{\textup{FS}} \oplus_q \frac{1}{m^k}\,\omega_{\mathbb{T}^2}$.

Consider any Hamiltonian  $S^1$\--action on
$(\mathbb{C}P^1)^{n-1}$, for example the action
that acts in the usual way on the first component
of the product, and acts trivially on the other
components. Assume that $S ^1$ acts trivially on the second factor $\mathbb{T}^2$. This gives rise to a $S^1$\--action on 
$(\mathbb{C}P^1)^{n-1} \times \mathbb{T}^2$.   Let $x_m$ ($m \in \mathbb{N}$) be arbitrary distinct points in
$\mathbb{T}^2$. Let $\pi(Y_m)$ be the
$S^1$\--invariant codimension\--two symplectic submanifold of 
 $(\mathbb{C}P^1)^{n-1} \times_{\mathbb{Z}/q \mathbb{Z}} \mathbb{T}^2$ given by projecting
$(\mathbb{C}P^1)^{n-1} \times \{x_m\}$ under the canonical projection
map $\pi:(\mathbb{C}P^1)^{n-1} \times \mathbb{T}^2 
\to (\mathbb{C}P^1)^{n-1} \times_{\mathbb{Z}/q \mathbb{Z}} \mathbb{T}^2$. 
The $S^1$\--action on $(\mathbb{C}P^1)^{n-1} \times \mathbb{T}^2$ gives 
rise to a $S^1$\--action on 
$(\mathbb{C}P^1)^{n-1} \times_{\mathbb{Z}/q \mathbb{Z}} \mathbb{T}^2$, with
``large'' fixed point sets, and with even larger
fixed point sets on the (infinite) connected symplectic sum
\begin{eqnarray} \label{exs}
\mathcal{M}_k:=\#_{\pi(Y_m),m \in \mathbb{N}} 
\Big((\mathbb{C}P^1)^{n-1} \times_{\mathbb{Z}/q \mathbb{Z}} \mathbb{T}^2, \,\,\,
(n-1) \omega_{\textup{FS}} \oplus_q \frac{1}{m^k}\, \omega_{\mathbb{T}^2}   \Big).
\end{eqnarray}
Let $\omega$ be the symplectic form
of $\mathcal{M}_k$. The space $\mathcal{M}_k$ is symplectomorphic to $(\mathbb{C}P^1)^{n-1} \times \Sigma_{\infty}$,
where $\Sigma_{\infty}$ is an infinite genus surface and has an area
form that ``decays'' at infinity. 
Thus the manifold $\mathcal{M}_k$ is non\--compact and K\"ahler.
By choosing an appropriate complete metric on $\Sigma_{\infty}$, $\mathcal{M}_k$ is complete. So Corollary 
\ref{nicecor} applies.

\subsection{Hodge decomposition with decay hypotheses}
For non-compact manifolds the Hodge decomposition does
not hold, in general. The first positive result
is Kodaira's decomposition on a
noncompact Riemannian manifold (see \cite{Kodaira1949},
\cite[p. 165]{deRham1973})
\[
\operatorname{L}^2(\wedge^q M, g) = 
\overline{\mathbf{d}\operatorname{C}^{\infty}_0
(\wedge^{q-1}M)}
\oplus \overline{\delta \operatorname{C}^{\infty}_0
(\wedge^{q+1}M)}
\oplus \mathcal{H}(\wedge^q M, g),
\]
where $\delta$ is the $\operatorname{L}^2$ formal adjoint
of the exterior derivative operator $\mathbf{d}$, the closures are taken in $\operatorname{L}^2 $, 
the subscript 0 denotes compact support, $\mathcal{H}(\wedge^q M, g): = \{\alpha \in \operatorname{L}^2
(\wedge^q M, g) \mid \mathbf{d}\alpha = 0, \delta \alpha = 0\}$, $\operatorname{L}^2 $ is taken relative to the 
measure associated to the Riemannian metric $g $, and the
sum is $\operatorname{L}^2 $-orthogonal. This kind of
Hodge decomposition is not enough for our purposes since
the best one can hope for, as the proof of Theorem 
\ref{thm_gen} shows, is the 
existence of smooth functions $f^\xi$ with compact support.
It is well known (see, e.g., \cite{Lockhart1987}) that, in general, the Laplacian is not a Fredholm operator,  that
the kernel of the Laplacian does not consists of closed 
and coclosed forms, and that it does not give a unique
or complete representation of the de Rham cohomology
groups.

Improvements on this theorem are possible only if one
changes the measure and the formal adjoint of $\mathbf{d}$
or puts restrictions on the manifold and the metric.
In both cases there are Hodge type decomposition theorems
that fit the hypotheses of Theorem \ref{thm_gen}. We have already seen such a situation in the Corollary \ref{nicecor}
when we used \cite{Li2009}. We will
present further corollaries of Theorem \ref{thm_gen} in both
situations in this subsection and the next.

\paragraph{Ahmed and Stroock \cite{AhSt2000} conditions.} Minimal requirements on the geometry of $M$ that give
a Hodge decomposition theorem and link harmonic forms
to de Rham cohomology are given by Ahmed and Stroock in
\cite[\S6]{AhSt2000}. The assumptions on the 
manifold $(M, g)$ are: 

\noindent \textbf{(AM)} \textit{$M$ is connected, complete, its Ricci curvature is
bounded below by $- \kappa_{\operatorname{Ric}}\leq 0$,
and the Riemann curvature operator is bounded above, i.e.,
$\left\langle \! \left\langle R \alpha, \alpha \right\rangle \! \right\rangle \leq \kappa \| \alpha\|_{ \operatorname{L}^2}$ for all $\alpha \in \Omega^2(M) $, where $\kappa\geq 0$.}

In addition, to get a Hodge decomposition, the Riemannian
measure needs to be changed by a factor 
$\operatorname{e}^{-U}$, where $U $ satisfies certain
conditions.

\noindent \textbf{(AU)} \textit{Let $U : M \rightarrow [0, \infty)$ be a smooth function with the following properties:
\begin{itemize}
\item $U $ has compact level sets
\item There exists $C<\infty$ and $\theta\in (0,1) $ such
that $\Delta U \leq C(1+U) $ and $\|\nabla U\|^2 \leq C
\operatorname{e}^{ \theta U}$.
\item There exists $\varepsilon > 0$ such that $\varepsilon
U^{1+ \varepsilon} \leq 1+\| \nabla U\|^2$.
\item There exists a $B<\infty$ such that $\left\langle \! \left\langle v_x, 
(\nabla^2 U)(v_x) \right\rangle \! \right\rangle \geq 
-B\|v_x\|^2$ for all $x \in M $ and $v_x \in 
\operatorname{T}_xM$.
\end{itemize} }

Let $\delta^U: \Omega^{k+1}(M) \rightarrow \Omega^k(M)$ be
defined by $\delta^U \alpha: = \operatorname{e}^U \delta (
\operatorname{e}^{-U} \alpha)$, $\Delta^U = \delta^U \mathbf{d} + \mathbf{d}\delta^U $, $\mathcal{H}^U : = 
\ker \Delta^U $, and $\operatorname{d}\!\lambda_U: = \operatorname{e}^{-U}
\operatorname{d}\!\lambda(g)$, where $\operatorname{d}\!\lambda(g)$ is the Riemannian measure
associated to $g$. Under these conditions the Radon-Nikodym derivative $\operatorname{e}^{-U}$ of
the Riemannian measure relative to $\operatorname{d}\!\lambda_U$ is bounded
on $M$. Relative to the measure $\operatorname{d}\!\lambda_U $ 
one associates the $\operatorname{L}^2_U$-spaces on forms.

We shall need the following results from 
\cite{AhSt2000}. Under the hypotheses \textbf{(AM)} and
\textbf{(AU)} we have the following results:
\begin{enumerate}
\item[(1)] \textit{Any closed $\operatorname{L}_U^2$-form $\alpha \in 
\Omega^1(M)$ has a unique $\operatorname{L}_U^2$-orthogonal decomposition $\alpha = \mathbf{d}f + \chi $, where $f $
is a $\operatorname{L}_U^2$ smooth function, $\chi \in \Omega^1(M) $, $\Delta^U \chi = 0$ and $\chi $ is of class
$\operatorname{L}_U^2$} (see \cite[Theorem 5.1]{AhSt2000}.
\item[(2)] \textit{Each cohomology class $[\alpha]\in  
\operatorname{H} ^1(M, \mathbb{R})$ has a unique smooth
$\operatorname{L}_U^2$ representative in $\ker \Delta^U$}
(see \cite[Theorem 6.4]{AhSt2000}).
\end{enumerate}

Choose
a  $G $-invariant Riemannian metric and a $G $-equivariant
complex structure $J $ such that $( \omega, g, \mathbf{J}) $
is compatible.

\begin{cor}\label{Stroock_conditions}
Assume that the compact Lie group acts on the connected symplectic
manifold $(M, \omega)$ preserving the symplectic structure.
Let $(\omega, g , \mathbf{J})$ be a compatible $G $-invariant
triple and assume hypothesis {\rm \textbf{(AM)}} holds. 
Let $U $ be any $G $-invariant function satisfying {\rm \textbf{(AU)}} and suppose that $\mathbf{J}\mathcal{H}^U \subset \mathcal{H}^U $. If the $G$-action has fixed points, it is Hamiltonian.
\end{cor}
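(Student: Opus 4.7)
The plan is to derive Corollary \ref{Stroock_conditions} directly from Theorem \ref{thm_gen} applied with the measure $\lambda = \lambda_U$, by checking each hypothesis of that theorem against the Ahmed--Stroock framework. First I would address the structural compatibility: since $U \geq 0$, the Radon--Nikodym derivative $d\lambda_U / d\lambda(g) = e^{-U}$ is bounded above by $1$, so the measure-theoretic requirement on $\lambda$ in Theorem \ref{thm_gen} holds. By definition the formal adjoint of $\mathbf{d}$ in $L^2_U$ is exactly $\delta^U$, so $\delta_\lambda = \delta^U$ and $\mathcal{H} = \mathcal{H}^U$. Moreover, since $U$ is $G$-invariant and $G$ acts by $g$-isometries, the flow $F_t$ of each $\xi_M$ satisfies $F_t^\ast e^{-U} = e^{-U}$ and $F_t^\ast d\lambda(g) = d\lambda(g)$, hence preserves $d\lambda_U$; this is exactly what Step~1 in the proof of Theorem \ref{thm_gen} requires.

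Next I would invoke the two quoted Ahmed--Stroock results. Result (1) (i.e.\ \cite[Theorem~5.1]{AhSt2000}) supplies hypothesis (i) of Theorem \ref{thm_gen}: every closed one-form in $L^2_U$ admits a unique $L^2_U$-orthogonal decomposition of the form $\mathbf{d}f + \chi$ with $\chi \in \mathcal{H}^U$. Result (2) (i.e.\ \cite[Theorem~6.4]{AhSt2000}) supplies hypothesis (ii): any cohomology class represented by a closed one-form in $L^2_U$ has a unique $L^2_U$ harmonic representative. Hypothesis (iii), namely $\mathbf{J}\mathcal{H}^U \subset \mathcal{H}^U$, is explicitly included in the statement of the corollary.

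The one piece that requires attention is verifying that $\mathbf{i}_{\xi_M}\omega \in L^2_U$ for each $\xi \in \mathfrak{g}$, so that hypothesis (i) applies to this form in Step~3 of the proof of Theorem \ref{thm_gen}. Since $\mathbf{J}$ is an $L^2$-isometry on forms, we have $\|\mathbf{i}_{\xi_M}\omega\|_g = \|\mathbf{J}\xi_M^\flat\|_g = \|\xi_M\|_g$, so
\[
\|\mathbf{i}_{\xi_M}\omega\|_{L^2_U}^2 = \int_M \|\xi_M\|_g^2\, e^{-U}\, d\lambda(g).
\]
Because $U$ has compact level sets and the growth conditions in (AU), namely $\varepsilon U^{1+\varepsilon} \leq 1 + \|\nabla U\|^2$ together with $\|\nabla U\|^2 \leq C e^{\theta U}$, force $e^{-U}$ to decay sufficiently fast at infinity, this integral converges for the smooth Killing field $\xi_M$ on the complete manifold $(M,g)$ whose Ricci curvature is bounded below by (AM); a standard growth bound on Killing fields under a lower Ricci bound suffices.

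Having verified the measure hypothesis, hypotheses (i)--(iii), $G$-invariance of the setup, and the $L^2_U$-integrability of $\mathbf{i}_{\xi_M}\omega$, Theorem \ref{thm_gen} applies and yields that the $G$-action is Hamiltonian. The main technical obstacle is the integrability step above; the rest is essentially a bookkeeping identification of the Ahmed--Stroock Hodge package with the abstract hypotheses (i)--(iii) of Theorem \ref{thm_gen}.
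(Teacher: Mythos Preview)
Your overall strategy—apply Theorem~\ref{thm_gen} with $\lambda = \lambda_U$ and match its hypotheses against the Ahmed--Stroock package—is exactly the paper's. However, you pass over the two points that constitute the paper's actual work. You write that ``by definition the formal adjoint of $\mathbf{d}$ in $L^2_U$ is exactly $\delta^U$, so $\delta_\lambda = \delta^U$ and $\mathcal{H} = \mathcal{H}^U$.'' Neither claim is definitional. In the Ahmed--Stroock setup $\delta^U$ is \emph{defined} by the formula $\delta^U\alpha := e^U\delta(e^{-U}\alpha)$, and one must check via integration by parts (using $\partial M=\varnothing$) that this realizes the $L^2_U$-adjoint of $\mathbf{d}$. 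More critically, $\mathcal{H}^U$ is defined as $\ker\Delta^U$, whereas the harmonic forms appearing in Theorem~\ref{thm_gen}(i) lie in $\ker\mathbf{d}\cap\ker\delta_\lambda$; on a non-compact manifold these two spaces need not coincide a priori, and the Ahmed--Stroock results (1) and (2) that you invoke are stated for $\ker\Delta^U$. The paper's proof consists precisely of verifying these two identifications, the second via the pairing identity $\langle\Delta^U\alpha,\alpha\rangle_{L^2_U} = \|\mathbf{d}\alpha\|^2_{L^2_U}+\|\delta^U\alpha\|^2_{L^2_U}$. Without them you cannot transfer results (1)--(2) into hypotheses (i)--(ii) of Theorem~\ref{thm_gen}, so your argument has a genuine gap exactly where the paper does its work.

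On the $L^2_U$-integrability of $\mathbf{i}_{\xi_M}\omega$: you are right that Step~3 of Theorem~\ref{thm_gen} uses this, and you deserve credit for flagging it (the paper's own proof does not address it). But your proposed justification is not sound. There is no ``standard growth bound on Killing fields under a lower Ricci bound''—hyperbolic space satisfies \textbf{(AM)} and carries Killing fields of exponential growth—and condition \textbf{(AU)} includes an \emph{upper} bound $\|\nabla U\|^2 \le Ce^{\theta U}$ on the growth of $U$, so it does not by itself force $e^{-U}$ to decay fast enough to absorb such growth. As written, this step remains unjustified.
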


\begin{proof}
Let $\dim M = 2n$. 
We shall verify the conditions in Theorem \ref{thm_gen}. 
We show that $\delta^U $ is the formal adjoint of 
$\mathbf{d}$ relative to the $\operatorname{L}^2_U$ inner
product. Indeed, if $\alpha \in \Omega^k(M)$ and $\beta
\in \Omega^{k+1}(M)$ are such that 
$\| \alpha\|_{\operatorname{L}^2_U}< \infty$, 
$\| \beta\|_{\operatorname{L}^2_U}< \infty$, $k \geq 0$,
we have
\begin{align*}
\left\langle \alpha, \delta^U \beta 
\right\rangle_{\operatorname{L}^2_U}
&=\int_M \left\langle \! \left\langle \alpha,  \operatorname{e}^U
\delta(\operatorname{e}^{-U} \beta) \right\rangle \! \right\rangle
\operatorname{e}^{-U} \operatorname{d}\! \lambda(g)
=\int_M \left\langle \! \left\langle \alpha,  
\delta(\operatorname{e}^{-U} \beta) \right\rangle \! \right\rangle
\operatorname{d}\! \lambda(g)\\
&= \int_M \alpha\wedge *\delta(\operatorname{e}^{-U}\beta)
= \int_M \mathbf{d} \alpha \wedge * \operatorname{e}^{-U}
\beta - \int_M \mathbf{d}(\alpha \wedge * 
\operatorname{e}^{-U} \beta).
\end{align*}
The second terms vanishes by Stokes' Theorem since $\partial
M = \varnothing$ and hence we get
\begin{equation}\label{identity_inner_product}
\left\langle \alpha, \delta^U \beta 
\right\rangle_{\operatorname{L}^2_U}
=\int_M \left\langle \! \left\langle \mathbf{d} \alpha, 
\operatorname{e}^{-U} \beta \right\rangle \! \right\rangle 
\operatorname{d}\! \lambda(g)
= \int_M \left\langle \! \left\langle \mathbf{d} \alpha, 
\beta \right\rangle \! \right\rangle \operatorname{e}^{-U} 
\operatorname{d}\! \lambda(g)
= \left\langle \mathbf{d}\alpha, \beta 
\right\rangle_{\operatorname{L}^2_U}
\end{equation}
as required.

Next, we show that $\ker \Delta^U = \ker \mathbf{d} \cap
\ker \delta^U$, which shows that a form is harmonic as
defined in Theorem \ref{thm_gen} (i) if and only if 
$\Delta^U$ vanishes on it. We begin with the identity
\[
\int_M \mathbf{d} \alpha \wedge * \operatorname{e}^{-U} \beta
= \int_M \alpha\wedge * \delta(\operatorname{e}^{-U} \beta)
\]
which is equivalent to \eqref{identity_inner_product}. If $\Delta^U \alpha = 0$, we have
\begin{align*}
0&= \int_M \left(\Delta^U \alpha \wedge * \alpha\right) \operatorname{e}^{-U} \operatorname{d}\! \lambda(g)
= \int_M \left(\mathbf{d} \delta^U\alpha \wedge * \alpha\right) \operatorname{e}^{-U} \operatorname{d}\! \lambda(g) + 
\int_M \left(\delta^U \mathbf{d}\alpha \wedge *\alpha\right) \operatorname{e}^{-U} \operatorname{d}\! \lambda(g)\\
& = \int_M \left\langle \! \left\langle \mathbf{d} \delta^U\alpha, \alpha \right\rangle \! \right\rangle\operatorname{e}^{-U}\operatorname{d}\! \lambda(g)
+ \int_M \left\langle \! \left\langle \delta^U\mathbf{d} \alpha, \alpha \right\rangle \! \right\rangle\operatorname{e}^{-U} \operatorname{d}\! \lambda(g)s
= \left\langle \mathbf{d}\delta^U\alpha, \alpha 
\right\rangle_{\operatorname{L}^2_U}
+ \left\langle \delta^U\mathbf{d}\alpha, \alpha 
\right\rangle_{\operatorname{L}^2_U}\\
& \stackrel{\eqref{identity_inner_product}}=
\left\langle \delta^U\alpha, \delta^U\alpha 
\right\rangle_{\operatorname{L}^2_U}
+ \left\langle \mathbf{d}\alpha, \mathbf{d}\alpha 
\right\rangle_{\operatorname{L}^2_U}.
\end{align*}
Therefore $\|\mathbf{d}\alpha\|_{\operatorname{L}^2_U}=0$
and $\|\delta^U\alpha\|_{\operatorname{L}^2_U}=0$ which
implies that $\mathbf{d}\alpha = 0 $ and $\delta^U \alpha
=0 $ since $\alpha$ is smooth. This proves that 
$\ker\Delta^U \subset \ker \mathbf{d} \cap \ker \delta^U $. 
The converse inclusion is obvious.

This shows that the hypotheses of Theorem \ref{thm_gen}
are satisfied and hence, provided that the $G $-action has
fixed points, we can conclude that it is Hamiltonian.
\end{proof}

\paragraph{Gong and Wang \cite{GoWa2004} conditions.} The
Hodge decomposition holds also under different conditions
involving the decay of the measure, as given in
\cite[Theorem 1.4]{GoWa2004}. Let the compact Lie group $G$ act on the
non-compact symplectic manifold $(M, \omega)$ by symplectic 
diffeomorphisms. Let $(\omega, g, \mathbf{J})$ be a 
$G$-invariant compatible triple. Denote by $\mathcal{R}$ the 
curvature term in the Weitzenb\"ock formula on one-forms and 
assume that $e^Vd\lambda(g)$ is a finite measure, where
$d\lambda(g) $ is the Riemannian volume associated to $g$.
Suppose that 
\begin{itemize}
\item $V $ is $G $-invariant
\item $\mathcal{R} - \operatorname{Hess}(V)$ is bounded 
below 
\item there exists a positive $G $-invariant function 
$U \in \operatorname{C}^2(M)$ such that $U+V$ is bounded
\item the sets $\{U \leq C\}$ are compact for all $C>0$
\item $\|\nabla U\| \rightarrow \infty$ as $U \rightarrow \infty $
\item $\operatorname{limsup}_{U \rightarrow \infty} 
\left(\Delta U/\|\nabla U\|^2 \right)< 1$
\end{itemize}
Then the Hodge decomposition holds, as shown in
\cite[Theorem 1.4]{GoWa2004}. Proceeding as before, we get
the following result.

\begin{cor}\label{GoWa_conditions}
Assume that the compact Lie group acts on the connected symplectic
manifold $(M, \omega)$ preserving the symplectic structure.
Let $(\omega, g , \mathbf{J})$ be a compatible $G $-invariant
triple and assume the above hypotheses. If $\mathbf{J}$ preserves the space of harmonic one-forms and the $G$-action has fixed points, it is Hamiltonian.
\end{cor}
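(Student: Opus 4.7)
My plan is to verify the hypotheses of Theorem \ref{thm_gen} using the Gong--Wang Hodge decomposition, following the same blueprint used in the proof of Corollary \ref{Stroock_conditions}. The natural choice is to take $\lambda = \operatorname{e}^V \operatorname{d}\! \lambda(g)$, so that the $\operatorname{L}^2_\lambda$ inner product coincides with the weighted inner product in \cite[Theorem 1.4]{GoWa2004}; the associated formal adjoint of $\mathbf{d}$, call it $\delta_\lambda = \delta^{-V}$, is defined by $\delta_\lambda \alpha = \operatorname{e}^{-V} \delta(\operatorname{e}^V \alpha)$, and the integration-by-parts identity proving that this is indeed the formal adjoint with respect to $\left\langle \cdot, \cdot \right\rangle_{\operatorname{L}^2_\lambda}$ is the same Stokes computation as in \eqref{identity_inner_product}.

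The first check is that the Radon--Nikodym derivative $\operatorname{e}^V$ of $\lambda$ with respect to the Riemannian measure is bounded on $M$. This follows immediately from two of the standing hypotheses: since $U \geq 0$ and $U+V$ is bounded, one has $V \leq (U+V) - U \leq \sup(U+V)$, so $\operatorname{e}^V$ is bounded above. The second check is that the decomposition and uniqueness hypotheses (i) and (ii) of Theorem \ref{thm_gen} are in place. This is exactly what \cite[Theorem 1.4]{GoWa2004} provides: each $\operatorname{L}^2_\lambda$-closed one-form decomposes $\operatorname{L}^2_\lambda$-orthogonally as $\mathbf{d}f + \chi$ with $\chi$ harmonic in the weighted sense, and each cohomology class in $\operatorname{L}^2_\lambda$ has a unique harmonic representative. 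As in the Ahmed--Stroock case, one should verify that $\ker \Delta_\lambda = \ker \mathbf{d} \cap \ker \delta_\lambda$ so that the notion of ``harmonic'' in Theorem \ref{thm_gen}(i) agrees with that of Gong--Wang; this is the same Green's-type identity used in Corollary \ref{Stroock_conditions}.

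The remaining hypotheses of Theorem \ref{thm_gen} are given directly. The $G$-invariance of $V$ guarantees that the $G$-action not only preserves $g$ (isometry, from $G$-invariance of the compatible triple) but also preserves the measure $\lambda$, so the argument of Step 1 of Theorem \ref{thm_gen} applies verbatim. The condition $\mathbf{J}\mathcal{H} \subset \mathcal{H}$ is assumed, and the existence of fixed points on the (connected) manifold is hypothesized. Once these checks are complete, Theorem \ref{thm_gen} immediately yields that the action is Hamiltonian.

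The main obstacle I expect is not conceptual but technical: confirming that the ``harmonic'' forms produced by the Gong--Wang decomposition are genuinely in $\ker \mathbf{d} \cap \ker \delta_\lambda$ (as opposed to merely annihilated by $\Delta_\lambda$ in a weak sense), and that the formal adjoint $\delta_\lambda$ of $\mathbf{d}$ relative to $\operatorname{L}^2_\lambda$ is well-defined on the relevant domain of smooth $\operatorname{L}^2_\lambda$ forms. Both points are handled by the same Stokes/Green computation as in Corollary \ref{Stroock_conditions}, using that $\partial M = \varnothing$ and that $\operatorname{e}^V \operatorname{d}\! \lambda(g)$ is a finite measure, which controls the boundary terms in the limiting arguments.
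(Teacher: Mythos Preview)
Your proposal is correct and follows the paper's own approach: the paper simply states that the Hodge decomposition holds by \cite[Theorem 1.4]{GoWa2004} and then says ``Proceeding as before, we get the following result,'' meaning one repeats the verification of the hypotheses of Theorem~\ref{thm_gen} exactly as in Corollary~\ref{Stroock_conditions}. Your write-up spells out precisely that verification (choice of $\lambda = \operatorname{e}^{V}\operatorname{d}\!\lambda(g)$, bounded Radon--Nikodym derivative via $U\geq 0$ and $U+V$ bounded, $\delta_\lambda = \delta^{-V}$ as the formal adjoint, $\ker\Delta_\lambda = \ker\mathbf{d}\cap\ker\delta_\lambda$, and $G$-invariance of $V$ for Step~1), so there is nothing to add.
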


\subsection{Manifolds with two ends}
Another class of examples is obtained by putting 
conditions on $M $. This class of 
examples appears in the work 
of Lockhart \cite[Example 0.16]{Lockhart1987}. We 
assume that $M $ has finitely
many ends, which means that $M $ contains a compact
submanifold $M_0$ whose smooth boundary $\partial M_0$
has finitely many components such that $M\setminus M_0 
= \partial M_0 \times (0, +\infty)$.
There is a natural additive monoid action of $[0, +\infty)$
on $\partial M_0 \times (0, +\infty)$ by translations
on the right factor. We say that a metric on $M$
is \emph{translation invariant} if on  
$\partial M_0 \times (0, +\infty)$ is invariant under
this action. Now suppose that $h_{\infty}$ is a translation
invariant metric, and let $\operatorname{D}_{\infty}$ denote 
the covariant derivative of the Levi-Civita connection associated to $h_\infty$. A metric $h$ is \emph{asymptotic
to $h_{\infty}$} if for each $k \in \mathbb{N}$
we have that
$$
\lim_{z \to \infty} \sup_{\omega \in \partial M_0}
\|\operatorname{D}^k_{\infty}h(\omega,z)-
\operatorname{D}^k_{\infty}h_{\infty}(\omega)
\|_{h_{\infty}}=0.
$$
The metric  $h$ is \emph{asymptotically translation 
invariant} if $h$ is asymptotic to a translation
invariant metric. Now let $h$ be an asymptotically
invariant metric. Let $g=\op{e}^{-2\rho}h$ with $\rho \in 
\op{C}^{\infty}(M)$. We say that the metric $g$ is
\emph{admissible} if there is a smooth, 
$(0, +\infty)$-invariant $1$-form $\theta$ on
$M_{\infty}$ with the property that
$$
\lim_{z \to \infty} \sup_{\omega \in \partial M_0}
\|\operatorname{D}^{k+1}_{(h)}\rho-
\operatorname{D}^k_{(h)}\theta \|_h=0,
$$
where $\operatorname{D}_{(h)}$ denotes the covariant derivative of the Levi-Civita connection associated to $h$.

\begin{cor}
Let $(M, \omega)$ be a connected symplectic manifold of
dimension at least four which has two
ends. Let $(\omega, g, \mathbf{J})$ be a $G $-invariant compatible triple.
Assume that $g=\op{e}^{2\rho}h$ is admissible, where 
$h$ is an asymptotically translation invariant metric,
$\rho(w,z)$ is decreasing,  and $\rho(\omega,z)
<-[(1+\epsilon)/2]\ln z$ on $\partial M_0 \times
[1,\infty)$ for some $\epsilon>0$. If the
$G$-action has fixed points, then it is Hamiltonian.
\end{cor}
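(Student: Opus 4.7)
The plan is to fit the hypotheses of Theorem~\ref{thm_gen} into Lockhart's weighted $\operatorname{L}^2$-Hodge theory for manifolds with ends, following exactly the pattern used in Corollaries~\ref{Stroock_conditions} and \ref{GoWa_conditions}. Take $\lambda = \mu_g$ to be the Riemannian measure associated with the conformally rescaled metric $g = \operatorname{e}^{2\rho}h$; then the Radon-Nikodym derivative of $\lambda$ with respect to itself is $1$, so the boundedness condition in Theorem~\ref{thm_gen} is trivial, and $\delta_\lambda = \delta$ is the ordinary codifferential of $g$.

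Next, I would invoke Lockhart's Example~0.16 (see \cite{Lockhart1987}): the admissibility of $g$, together with the decay $\rho(\omega, z) < -[(1+\epsilon)/2]\ln z$ on $\partial M_0 \times [1, \infty)$ and the two-ends hypothesis, places the Laplacian on one-forms in the Fredholm range of the weighted Sobolev calculus on manifolds with ends. This yields exactly the weak $\operatorname{L}^2$-Hodge decomposition required by hypothesis~(i) of Theorem~\ref{thm_gen}: every closed one-form $\alpha$ with $\|\alpha\|_{\operatorname{L}^2_{\mu_g}} < \infty$ splits $\operatorname{L}^2$-orthogonally as $\alpha = \mathbf{d}f + \chi$ with $f \in \operatorname{C}^{\infty}(M)$, $\mathbf{d}f \in \operatorname{L}^2_{\mu_g}$, and $\chi$ a smooth harmonic $\operatorname{L}^2_{\mu_g}$ one-form; the fact that the kernel of the Fredholm Laplacian on the relevant weighted space injects into $\operatorname{H}^1(M, \mathbb{R})$ gives hypothesis~(ii), uniqueness of harmonic representatives. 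The dimension restriction $\dim M \geq 4$ and the decreasing nature of $\rho$ are exactly what make the weight $e^{2\rho}$ sit in the admissible range for Lockhart's indicial roots.

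The preservation $\mathbf{J}\mathcal{H} \subset \mathcal{H}$ needed for hypothesis~(iii) is a standing assumption on the compatible triple (as in Corollary~\ref{GoWa_conditions}); it is available here because one can average the triple over the compact group $G$ so that each infinitesimal generator is Killing, and then the interaction of $\mathbf{J}$ with $\mathbf{d}$ and $\delta$ on harmonic forms proceeds as in the Kähler case. Finally, the contractions $\mathbf{i}_{\xi_M}\omega$ lie in $\operatorname{L}^2_{\mu_g}$ because $G$ is compact and acts by isometries, so $\xi_M$ has $g$-bounded norm, while the decay $\rho < -[(1+\epsilon)/2]\ln z$ makes the volume of each end finite (the volume form $\operatorname{e}^{2n\rho}\mu_h$ integrates in $z$ once $\dim M = 2n \geq 4$).

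With hypotheses~(i), (ii), (iii) and the $\operatorname{L}^2_{\mu_g}$-integrability of $\mathbf{i}_{\xi_M}\omega$ in place, Theorem~\ref{thm_gen} applies and produces a momentum map, and the standard $G$-averaging argument promotes it to an equivariant one, so the action is Hamiltonian. The main obstacle I expect is matching Lockhart's weighted Sobolev framework to the statement of Theorem~\ref{thm_gen}: one must verify that the weights implicit in the admissibility condition coincide with the ordinary $\operatorname{L}^2_{\mu_g}$ norm (which they do because the conformal factor has been absorbed into $g$), and that Lockhart's decomposition produces a \emph{smooth} potential $f$ rather than a merely distributional solution; elliptic regularity on smooth closed one-forms handles this, but the reference to \cite{Lockhart1987} must be quoted carefully.
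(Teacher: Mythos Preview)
Your overall strategy---take $\lambda=\mu_g$, quote Lockhart \cite[Example~0.16, Formula~(0.16.1)]{Lockhart1987} to obtain the Hodge decomposition on one-forms and the uniqueness of harmonic representatives in $\operatorname{H}^1(M,\mathbb{R})$, and then invoke Theorem~\ref{thm_gen}---is exactly the paper's argument, which it states in a single sentence.

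Where you go beyond the paper you stumble. You first call $\mathbf{J}\mathcal{H}\subset\mathcal{H}$ ``a standing assumption,'' which is the right attitude, but then attempt to justify it: after $G$-averaging, you say, ``the interaction of $\mathbf{J}$ with $\mathbf{d}$ and $\delta$ on harmonic forms proceeds as in the K\"ahler case.'' That inference is invalid. The K\"ahler identities that force $\mathbf{J}$ to commute with the Hodge Laplacian rely on the \emph{integrability} of $\mathbf{J}$ (equivalently $\nabla\mathbf{J}=0$), not on $G$-invariance of the triple or on $\xi_M$ being Killing; for a generic compatible almost complex structure there is no reason $\mathbf{J}$ should preserve $\ker\mathbf{d}\cap\ker\delta$. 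In fact the paper's corollary simply omits this hypothesis---compare Corollaries~\ref{Stroock_conditions} and~\ref{GoWa_conditions}, where it \emph{is} stated explicitly---so it is evidently an implicit assumption (or an oversight), not something derivable from the listed data. Treat it as a hypothesis. A smaller issue in the same spirit: ``$\xi_M$ has $g$-bounded norm because $G$ is compact and acts by isometries'' is not automatic on a non-compact manifold (rotate $\mathbb{R}^2$ by $S^1$), so the $\operatorname{L}^2_{\mu_g}$-integrability of $\mathbf{i}_{\xi_M}\omega$ also needs either a genuine argument using the end structure or should be flagged as an assumption; the paper's one-line proof does not address it.
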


The proof of the corollary follows from 
Theorem \ref{thm_gen} because there is 
a Hodge decomposition theorem for these manifolds
and each cohomology class in $\op{H}^1(M,\mathbb{R})$ 
has a unique harmonic representative, see
\cite[Formula (0.16.1)]{Lockhart1987}.

{\footnotesize

\bibliographystyle{new}
\addcontentsline{toc}{section}{References}

}

\smallskip\noindent
\'Alvaro Pelayo\\
University of California, Berkeley\\
Mathematics Department\\
Berkeley, CA 94720-3840, USA\\
{\em E\--mail}: \texttt{apelayo@math.berkeley.edu} 
\bigskip

\smallskip\noindent
Tudor S. Ratiu\\
Section de Math\'ematiques and Bernoulli Center\\
Station 8\\
Ecole Polytechnique F\'ed\'erale
de Lausanne\\
 CH-1015 Lausanne, Switzerland\\
{\em E\--mail}: \texttt{tudor.ratiu@epfl.ch}
\end{document}